\newcommand{\half}{\ensuremath{\protect\tfrac{1}{2}}}
\theoremstyle{plain}
\newtheorem{theorem}{Theorem}
\newtheorem{lemma}[theorem]{Lemma}
\newtheorem{corollary}[theorem]{Corollary}
\newtheorem{claim}[theorem]{Claim}
\theoremstyle{definition}
\newtheorem{conjecture}[theorem]{Conjecture}
\newcommand{\spacing}[1]{
\renewcommand{\baselinestretch}{#1}
\setlength{\footnotesep}{\baselinestretch\footnotesep}}
\newcommand{\thmlabel}[1]{\label{thm:#1}}
\newcommand{\thmref}[1]{Theorem~\ref{thm:#1}}
\newcommand{\twothmref}[2]{Theorems~\ref{thm:#1} and \ref{thm:#2}}
\newcommand{\lemlabel}[1]{\label{lem:#1}}
\newcommand{\lemref}[1]{Lemma~\ref{lem:#1}}
\newcommand{\eqnlabel}[1]{\label{eqn:#1}}
\newcommand{\eqnref}[1]{\eqref{eqn:#1}}
\newcommand{\twoeqnref}[2]{\eqref{eqn:#1} and \eqref{eqn:#2}}
\newcommand{\figlabel}[1]{\label{fig:#1}}
\newcommand{\figref}[1]{Figure~\ref{fig:#1}}
\newcommand{\figsref}[1]{Figures~\ref{fig:#1}}
\newcommand{\seclabel}[1]{\label{sec:#1}}
\newcommand{\secref}[1]{Section~\ref{sec:#1}}
\newcommand{\conjlabel}[1]{\label{con:#1}}
\newcommand{\conjref}[1]{Conjecture~\ref{con:#1}}    
\newcommand{\corlabel}[1]{\label{cor:#1}}
\newcommand{\corref}[1]{Corollary~\ref{cor:#1}}
\newcommand{\claimlabel}[1]{\label{claim:#1}}
\newcommand{\claimref}[1]{Claim~\ref{claim:#1}}
\newcommand{\Figure}[4][htb]{
\begin{figure}[#1]
	\vspace*{1ex}
	\begin{center}#3\end{center}
	\vspace*{-1ex}
	\caption{\figlabel{#2}#4}
\end{figure}
}
\DeclareMathOperator{\conv}{conv}
\DeclareMathOperator{\ES}{ES}
\DeclareMathOperator{\dist}{dist}
\newcommand{\FLOOR}[1]{\ensuremath{\protect\left\lfloor#1\right\rfloor}}
\begin{document}

\title[Empty Pentagons]{Every Large Point Set contains\\ Many Collinear Points or an Empty Pentagon}


\author [Abel et al.]{Zachary Abel}
\address{ Department of Mathematics, 
 Harvard University
\newline Cambridge, Massachusetts, U.S.A.}
\email{zabel@fas.harvard.edu}

\author[]{Brad Ballinger}
\address{ Davis School for Independent Study
\newline Davis, California, U.S.A.}
\email{ballingerbrad@yahoo.com}

\author[]{Prosenjit Bose}
\address{ School of Computer Science, 
 Carleton University
\newline Ottawa, Canada}
\email{jit@scs.carleton.ca}

\author[]{S\'ebastien Collette}
\address{ D\'epartement d'Informatique, 
 Universit\'e Libre de Bruxelles
\newline Brussels, Belgium}
\email{sebastien.collette@ulb.ac.be}

\author[]{Vida Dujmovi\'{c}}
\address{ School of Computer Science, 
 Carleton University 
\newline Ottawa, Canada}
\email{vida@scs.carleton.ca}

\author[]{Ferran Hurtado}
\address{Departament de Matem{\`a}tica Aplicada II, 
 Universitat Polit{\`e}cnica de Catalunya
\newline  Barcelona, Spain}
\email{ferran.hurtado@upc.edu}
 
\author[]{Scott D. Kominers}
\address{ Department of Mathematics, 
 Harvard University
\newline Cambridge, Massachusetts, U.S.A.}
\email{kominers@fas.harvard.edu;skominers@gmail.com}

\author[]{Stefan Langerman}
\address{Ma\^itre de Recherches du F.R.S.-FNRS
\newline D\'epartement d'Informatique, 
 Universit\'e Libre de Bruxelles
\newline Brussels, Belgium}
\email{stefan.langerman@ulb.ac.be}

\author[]{Attila P\'or}
\address{ Department of Mathematics, 
  Western Kentucky University
\newline Bowling Green,  Kentucky, U.S.A.}
\email{attila.por@wku.edu}

\author[]{David~R.~Wood}
\address{ Department of Mathematics and Statistics, 
 The University of Melbourne
\newline Melbourne, Australia}
\email{woodd@unimelb.edu.au}

\begin{abstract}
We prove the following generalised empty pentagon theorem: for every integer $\ell \geq 2$, every sufficiently large set of points in the plane contains $\ell$ collinear points or an empty pentagon.  As an application, we settle the next open case of the ``big line or big clique'' conjecture of K\'ara, P\'or, and Wood [\emph{Discrete Comput.\ Geom.}\ 34(3):497--506, 2005].  
\end{abstract}

\keywords{Erd\H{o}s-Szekeres Theorem, happy end problem, big line or big clique conjecture, empty quadrilateral, empty pentagon, empty hexagon}

\subjclass[2000]{52C10 Erd\H os problems and related topics of discrete geometry, 05D10 Ramsey theory}

\maketitle

\section{Introduction}

While the majority of theorems and problems about sets of points in the plane assume that the points are in general position,
there are many interesting theorems and problems about sets of points with collinearities. 
The Sylvester-Gallai Theorem and the orchard problem are some examples; see \citep{BMP}. 
The main contribution of this paper is to extend the `empty pentagon' theorem about point sets in general position to point sets with collinearities.

\subsection{Definitions}

We begin with some standard definitions. 
Let $P$ be a finite set of points in the plane. We say that $P$ is in \emph{general position} if no three points in $P$ are collinear.
Let $\conv(P)$ denote the convex hull of $P$. 
We say that $P$ is in \emph{convex position} if every point of $P$ is on the boundary of $\conv(P)$.
A point $v\in P$ is a \emph{corner} of $P$ if $\conv(P-v)\neq\conv(P)$.
We say that $P$ is in \emph{strictly convex position} if each point of $P$ is a corner of $P$.
A \emph{strictly convex $k$-gon} is the convex hull of $k$ points in strictly convex position.
If $X\subseteq P$ is a set of $k$ points in strictly convex position  and $\conv(X)\cap P=X$, 
then $\conv(X)$ is called a \emph{$k$-hole} (or an \emph{empty strictly convex $k$-gon}) of $P$.
A $4$-hole is called an \emph{empty quadrilateral}, 
a $5$-hole is called an \emph{empty pentagon}, 
a $6$-hole is called an \emph{empty hexagon}, etc.

For distinct points $a,b,c$ in the plane, let $\Delta[a,b,c]$ be the closed triangle determined by $a,b,c$, 
and let $\Delta(a,b,c)$ be the open triangle determined by $a,b,c$.
For integers $n\leq m$, let $[n,m]:=\{n,n+1,\dots,m\}$ and $[n]:=[1,n]$.

\subsection{Erd\H{os}-Szekeres Theorem}

The Erd\H{os}-Szekeres Theorem \citep{ES35} states that for every integer $k$ there is a minimum integer $\ES(k)$ such that every set of at least $\ES(k)$ points in general position in the plane contains $k$ points in convex position (which are therefore in strictly convex position). See \citep{MorrisSoltan-BAMS00,BK-LNCS01,BMP,TothValtr05} for surveys of this theorem. The following generalisation of the Erd\H{os}-Szekeres Theorem for point sets with collinearities is easily proved by applying a suitable perturbation of the points (see \secref{ErdosSzekeres}):

\begin{theorem}
\thmlabel{ErdosSzekeres} For every integer $k$ every set of at least $\ES(k)$ points in the plane contains $k$ points in  convex position.
\end{theorem}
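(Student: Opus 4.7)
The plan is to reduce to the classical Erd\H{o}s--Szekeres Theorem by a small perturbation argument. Let $P$ be a finite set of points in the plane with $|P|\geq \ES(k)$, and for each $p\in P$ let $p'$ denote a perturbed copy of $p$ at distance at most $\varepsilon$ from $p$, where $\varepsilon>0$ will be chosen shortly. Write $P':=\{p':p\in P\}$. For a generic choice of the displacement vectors (for instance, chosen independently from an absolutely continuous distribution), $P'$ is in general position almost surely, since each ``bad'' event (three perturbed points collinear, two coinciding) is a non-trivial algebraic condition of measure zero. Hence a valid $P'$ exists with $|P'|=|P|\geq \ES(k)$, and the classical Erd\H{o}s--Szekeres Theorem supplies a subset $X'\subseteq P'$ of $k$ points in strictly convex position.

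Next I would argue that the corresponding subset $X\subseteq P$ is in convex position, provided $\varepsilon$ was chosen small enough. The key observation is that ``being in the interior of the convex hull'' is an open condition: if some $x\in X$ lay in the interior of $\conv(X)$, then $x$ would be at some positive distance $\delta_{X,x}>0$ from the boundary of $\conv(X)$, and for $\varepsilon<\delta_{X,x}/2$ the perturbed point $x'$ would still lie in the interior of $\conv(X')$, contradicting $x'$ being a corner of $X'$. Since $P$ is finite, there are only finitely many pairs $(X,x)$ with $x$ in the interior of $\conv(X)$, so one can fix $\varepsilon$ smaller than half of the minimum of the corresponding $\delta_{X,x}$'s (and small enough to achieve general position); this choice works simultaneously for every potential $X$.

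With this $\varepsilon$ fixed and $X'$ produced as above, every point of $X$ lies on the boundary of $\conv(X)$, so $X$ is in convex position, giving the desired $k$ points.

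The only step requiring any care is choosing $\varepsilon$ small enough to simultaneously (i) avoid introducing collinearities or coincidences, and (ii) preserve the ``interior versus boundary'' dichotomy for every subset of $P$; but since $P$ is finite and both constraints reduce to finitely many strict inequalities, this presents no real obstacle.
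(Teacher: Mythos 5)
Your proof is correct and follows essentially the same route as the paper: perturb $P$ into general position by a sufficiently small amount, apply the classical Erd\H{o}s--Szekeres Theorem to the perturbed set, and pull the resulting $k$ points back to $P$. The paper's \lemref{Perturbation} justifies the pullback by choosing the perturbation small enough to preserve the orientation of every non-collinear triple of $P$, whereas you instead use the openness of the ``lies in the interior of the convex hull'' condition over the finitely many pairs $(X,x)$; both verifications are valid.
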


The Erd\H{os}-Szekeres Theorem generalises for points in strictly convex position as follows:

\begin{theorem} 
\thmlabel{StrictErdosSzekeres}
For all integers $\ell\geq2$ and $k\geq3$ there is a minimum integer $\ES(k,\ell)$ such that  every set of at least  $\ES(k,\ell)$ points in the plane contains:
\begin{itemize}
\item $\ell$ collinear points, or
\item $k$ points in strictly  convex position.
\end{itemize} 
\end{theorem}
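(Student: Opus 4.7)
The idea is to reduce to Theorem~\ref{thm:ErdosSzekeres} by oversampling: find so many points in (not-necessarily-strictly) convex position that the pigeonhole forces enough of them to be corners, and hence in strictly convex position.

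First, the case $\ell = 2$ is vacuous (any two points are collinear, so $\ES(k,2)=k$ suffices), so I assume $\ell \geq 3$. I propose to set
\[
\ES(k,\ell) \;:=\; \ES\bigl(k(\ell-2)\bigr),
\]
and let $P$ be any set of at least $\ES(k,\ell)$ points in the plane containing no $\ell$ collinear points; I must show $P$ contains $k$ points in strictly convex position.

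By \thmref{ErdosSzekeres}, $P$ contains a subset $Q$ of $k(\ell-2)$ points in convex position. Write $c$ for the number of corners of $Q$, and list them as $v_1,\dots,v_c$ in cyclic order around $\conv(Q)$. Every point of $Q$ lies on the boundary of $\conv(Q)$, so every non-corner point of $Q$ lies in the relative interior of some edge $v_iv_{i+1}$. If edge $v_iv_{i+1}$ contains $e_i$ non-corner points of $Q$, then these $e_i$ points together with $v_i$ and $v_{i+1}$ are $e_i+2$ collinear points of $P$; by hypothesis $e_i + 2 \leq \ell-1$, hence $e_i \leq \ell - 3$. Summing,
\[
k(\ell-2) \;=\; |Q| \;=\; c + \sum_{i=1}^{c} e_i \;\leq\; c + c(\ell-3) \;=\; c(\ell-2),
\]
so $c \geq k$. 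Finally, any $k$ corners of $Q$ are extreme points of $\conv(Q)$, and an extreme point of a set remains extreme in every subset containing it; thus these $k$ corners are in strictly convex position, as required.

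There is really no serious obstacle here: the only content beyond the classical Erd\H{o}s--Szekeres theorem (in its collinearity-tolerant form \thmref{ErdosSzekeres}) is the edge-counting argument above, which is elementary. The mild care required is to verify that ``corner of $Q$'' passes to ``corner of the chosen $k$-point subset,'' and that the factor $\ell - 2$ in the bound correctly accounts for the up-to-$\ell-3$ interior points each edge may carry.
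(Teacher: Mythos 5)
Your proof is correct, and it follows the same top-level route as the second of the paper's three proofs (\lemref{ErdosSzekeresConvexToStrict}): first invoke \thmref{ErdosSzekeres} to obtain a large set $Q$ in (non-strict) convex position, then use the absence of $\ell$ collinear points to extract $k$ points in strictly convex position. The difference lies entirely in the extraction step. The paper proves \lemref{ConvexToStrict}, a multi-case induction that determines the relevant extremal function $q(k,\ell)$ exactly (about $\half(k-1)(\ell-1)$), and its optimal subsets may include non-corner points of $Q$ that become corners of a carefully chosen subset. You replace all of this with a one-line pigeonhole: each hull edge carries at most $\ell-3$ non-corner points, so $|Q|\leq c(\ell-2)$ where $c$ is the number of corners, hence $c\geq k$; and since an extreme point of $Q$ remains extreme in any subset containing it, any $k$ corners are in strictly convex position. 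This is substantially more elementary and fully adequate for the existence statement, at the price of a constant-factor-larger argument fed into $\ES(\cdot)$ (and hence a polynomially worse final bound, given the exponential growth of $\ES$); it also could not replace \lemref{ConvexToStrict} where the paper needs the sharp constant (in the proof of \thmref{GenFiveHole}, $2\ell-1$ consecutive points forcing five in strictly convex position). One pedantic point you should add: since $k(\ell-2)\geq\ell$ for $k\geq3$ and $\ell\geq3$, the set $Q$ is not collinear, so $\conv(Q)$ is two-dimensional and the decomposition of its boundary into edges between consecutive corners is legitimate. The paper's other two proofs (via a large general-position subset, and via Ramsey's theorem for $4$-uniform hypergraphs) are genuinely different routes not touched by your argument.
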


Of course, the conclusion in \thmref{StrictErdosSzekeres} that there is a large set of collinear points is unavoidable, since  a large set of collinear points only contains two points in strictly convex position. \thmref{StrictErdosSzekeres} is known (it is Exercise 3.1.3 in \citep{Mat02}), but as far as we are aware, no proof of it has appeared in the literature and no explicit bounds on $\ES(k,\ell)$ have been formulated. To illustrate various proof techniques in geometric Ramsey theory, we present three proofs of \thmref{StrictErdosSzekeres}. The first proof finds a large subset of points in general position and then applies the standard Erd\H{o}s-Szekeres theorem (see \lemref{ErdosSzekeresGenPos}). The second proof first applies \thmref{ErdosSzekeres} to obtain a large subset in convex position, in which a large subset in strictly convex position is found (see \lemref{ErdosSzekeresConvexToStrict}). The third proof is based on Ramsey's Theorem for hypergraphs (see \secref{Quads}).


\subsection{Empty Polygons}

Attempting to strengthen the Erd\H{os}-Szekeres Theorem, \citet{Erdos75} asked whether for each fixed $k$ every sufficiently large set of points in general position contains a $k$-hole. \citet{Harborth78} answered this question in the affirmative for $k\leq 5$, by showing that every set of at least ten points in general position contains a $5$-hole. On the other hand, \citet{Horton83} answered Erd\H{o}s' question in the negative for $k\geq7$, by constructing arbitrarily large sets of points in general position that contain no $7$-hole. The remaining case of $k=6$ was recently solved independently by  \citet{Gerken-DCG08} and \citet{Nicolas-DCG07}, who proved that every sufficiently large set of points in general position contains a $6$-hole.
See \citep{WuDing-AML08,WuDing07,Valtr-EmptyHexagon,Koshelev-DAN07,BaranyValtr-SSMH04,Dumitrescu-SSMH00,VLK-PMH07,Valtr-DCG02,Valtr-SSMH95,Valtr-DCG92,PRS-JCTA06,Hosono-DM05,DuDing-JAMC05,BHKU-PMH04,Nyklova-SSMH03,KL-PMH03,Overmars-DCG03,DEO-Algo90} for more on empty convex polygons.

The above results do not immediately generalise to sets with a bounded number of collinear points by simply choosing a large subset in general position as in the first proof of the Erd\H{os}-Szekeres Theorem (since the deleted points might `fill a hole'). Nevertheless, we prove the following `generalised empty pentagon' theorem, which is the main contribution of this paper (proved in \secref{Pentagons}).

\begin{theorem}
\thmlabel{GenFiveHole}
For every integer $\ell\geq2$, every finite set of at least $\ES(\frac{(2\ell-1)^{\ell}-1}{2\ell-2} )$ points in the plane contains
\begin{itemize}
\item $\ell$ collinear points, or
\item a $5$-hole.
\end{itemize}
\end{theorem}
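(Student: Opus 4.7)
The plan is to apply \thmref{ErdosSzekeres} to extract from $P$ a subset $Q$ of $N := \frac{(2\ell-1)^\ell - 1}{2\ell-2}$ points in convex position. If some $\ell$ points of $Q$ are collinear we are done, so assume not; then each edge of $\conv(Q)$ carries at most $\ell-1$ points of $Q$, and the set $C$ of strictly convex corners of $\conv(Q)$ satisfies $|C| \geq N/(\ell-1)$, which is of order $(2\ell-1)^{\ell-1}$. A useful preliminary observation is that any pentagon whose vertices are five corners of $C$ is already empty of $Q$: the remaining points of $Q$ lie on $\partial\conv(Q)$, confined to the ``cap'' regions cut off by the chords of the pentagon from the boundary. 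Consequently the only obstructions to producing a $5$-hole of $P$ with vertices in $C$ are points of $P \setminus Q$ lying inside $\conv(Q)$.

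The heart of the argument should then be an iterative procedure on $C$ that either produces a $5$-hole of $P$ or exhibits $\ell$ collinear points. The geometric-series form $N = 1 + (2\ell-1) + \cdots + (2\ell-1)^{\ell-1}$, which satisfies the recursion $N_i = 1 + (2\ell-1) N_{i-1}$, naturally suggests $\ell$ stages, each shrinking the current corner set by a factor of $2\ell-1$ and recording one additional point toward a growing collinear configuration. At each stage, I would attempt to form a pentagon from five well-chosen corners of the current subset; if some interior point $x\in P$ blocks it, I would use $x$ together with a previously fixed reference point, and the hypothesis that no $\ell$ points of $P$ are collinear, to restrict to a sub-polygon whose corner count drops by exactly the factor $2\ell-1$. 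The branching factor $2\ell-1$ should arise geometrically as follows: at most $\ell-1$ lines through $x$ can carry a further point of the growing collinear set, and these $\ell-1$ lines cut the neighborhood of $x$ into $2(\ell-1)+1 = 2\ell-1$ sectors to which pigeonhole applies.

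The main obstacle is to design a pruning invariant so that every failed iteration forces a \emph{new} collinearity on a single pre-committed line, rather than merely decreasing the corner count. In other words, one must choose the five trial corners carefully enough that any blocking interior point is forced to extend the growing collinear set, not to lie ``off to the side.'' Identifying the correct invariant — likely a line $L$ accumulating one more point of $P$ per stage, together with a corner subset confined to one side of $L$ — will be the technical crux. Once this invariant is secured, the quantitative bound should fall out cleanly: starting from $(2\ell-1)^{\ell-1}$ candidate corners, after $\ell-1$ successful prunings either the remaining subproblem is obstruction-free and furnishes a $5$-hole, or the line $L$ holds $\ell$ points of $P$ and we are done by the collinearity outcome.
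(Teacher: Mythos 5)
Your proposal is an outline whose central step is missing, and you say so yourself: the ``pruning invariant'' that would force every blocked pentagon to contribute a new point to a single pre-committed line is exactly the technical crux, and it is left unidentified. Without it there is no proof. Two further points of concern. First, your preliminary observation is not quite correct as stated: if two of the five chosen corners of $C$ are consecutive corners of $\conv(Q)$, the points of $Q$ on the edge between them lie \emph{on} a chord of the pentagon, hence in the closed convex hull, so the pentagon is not automatically a hole of $Q$. Second, your accounting is off: starting from about $(2\ell-1)^{\ell-1}$ corners and shrinking by a factor of $2\ell-1$ at each of $\ell-1$ stages leaves roughly one point, not five, so even if the pruning step existed the quantitative bound would not ``fall out cleanly'' in the form you describe. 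Your proposed source of the factor $2\ell-1$ ($\ell-1$ lines through a blocking point cutting the plane into $2\ell-1$ sectors) is also speculative and differs from where that factor actually comes from.

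For comparison, the paper resolves the crux quite differently. It takes $A_1$ to be a \emph{$k$-minimal} set of $k$ points in convex position (no convex-position $k$-set has strictly smaller hull) and builds the convex layers $A_2,\dots,A_\ell$ inside it; the factor $2\ell-1$ is $q(5,\ell)$ from \lemref{ConvexToStrict}, since any $2\ell-1$ consecutive points of a layer contain five in strictly convex position and hence their hull must contain a point of the next layer, which keeps every layer nonempty. Fixing $z$ in the innermost layer, the minimality of $A_1$ forces some arc of $A_1$ to be ``empty'' toward $z$, and then a chain of \emph{followers} is traced inward: each follower yields a $4$-hole, each follower must be double-, left-, or right-aligned with the segments to $z$ (else a $5$-hole appears), persistent alignment forces $\ell$ collinear points through $z$, and the first change of alignment type produces a $5$-hole. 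These devices --- the minimal convex polygon, the layer structure anchored at $z$, and the follower/alignment dichotomy --- are precisely the invariant you were looking for, and none of them appears in your sketch.
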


Note that \citet{Eppstein-SOCG07} characterised the point sets with no $5$-hole in terms of the acyclicity of an associated \emph{quadrilateral graph}. 
However, it is not clear how this result can be used to prove \thmref{GenFiveHole}. Earlier, \citet{Rabinowitz} defined a set of points with no $5$-hole to have the \emph{pentagon property}.

\subsection{Big Line or Big Clique Conjecture}

\thmref{GenFiveHole} has an important ramification for the following ``big line or big clique'' conjecture by \citet{KPW-DCG05}.
Let $P$ be a finite set of points in the plane. Two distinct points $v,w\in P$ are \emph{visible} with respect to $P$ if $P\cap\overline{vw}=\{v,w\}$, where $\overline{vw}$ denotes the closed line segment between $v$ and $w$. The \emph{visibility graph} of $P$  has vertex set $P$, where two distinct points $v,w\in P$ are adjacent if and only if they are visible with respect to $P$. 

\begin{conjecture}[\citep{KPW-DCG05}]
\conjlabel{BigClique}
For all integers $k$ and $\ell$ there is an integer $n$ such that every finite set of at least $n$ points in the plane contains:
\begin{itemize}
\item $\ell$ collinear points, or 
\item $k$ pairwise visible points (that is, the visibility graph contains a $k$-clique).
\end{itemize}
\end{conjecture}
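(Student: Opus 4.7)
My plan is to attack \conjref{BigClique} by induction on $k$, with $\ell$ held fixed throughout. The cases $k\leq 4$ are elementary consequences of \thmref{StrictErdosSzekeres}: any empty triangle or empty quadrilateral obtained from a strictly convex subset provides a $3$- or $4$-clique in the visibility graph. The case $k=5$ falls out of \thmref{GenFiveHole} directly, since the five vertices of a $5$-hole are pairwise visible with respect to $P$ by the very definition of a hole (no point of $P$ lies inside the pentagon, hence none lies on any edge). So the heart of the matter is the inductive step for $k\geq 6$.

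For the inductive step I would take a large point set $P$ with no $\ell$ collinear points and a $(k-1)$-clique $C=\{v_1,\ldots,v_{k-1}\}$ in the visibility graph (obtained by applying the inductive hypothesis with a suitably inflated threshold), and try to produce a point $v_k\in P\setminus C$ visible to every element of $C$. Writing $V_i\subseteq P$ for the set of points visible to $v_i$, a crude count --- each line through $v_i$ carries at most $\ell-1$ other points of $P$, at most two of which are visible to $v_i$ --- gives $|V_i|\geq (|P|-1)/(\ell-1)$. If one can show that $V_1\cap\cdots\cap V_{k-1}$ is non-empty whenever $|P|$ is sufficiently large, the induction closes; otherwise, a careful case analysis of which third points block which sightlines should force $\ell$ collinear points.

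The hard part will be precisely this intersection step. The naive union bound
\[
|V_1\cap\cdots\cap V_{k-1}|\;\geq\;|P|-(k-1)\bigl(|P|-|P|/(\ell-1)\bigr)
\]
is already negative once $k,\ell\geq 3$, so one must exploit that the $v_i$ themselves form a visibility clique --- and in particular lie in convex position --- to derive genuine correlations between the $V_i$ rather than treating them independently. A natural alternative, iterating \thmref{GenFiveHole} inside the neighbourhoods $V_i$ to produce $5$-holes that share vertices, is obstructed by the fact that a $5$-hole of $V_i$ need not be a $5$-hole of $P$, since points of $P\setminus V_i$ may lie inside it. Moreover, for $k\geq 7$ one cannot hope to realise the required $k$-clique as the vertex set of a $k$-hole at all: Horton's construction supplies arbitrarily large point sets in \emph{general position} with no $7$-hole. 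Consequently the full conjecture appears to demand a fundamentally new mechanism for producing visibility cliques that are not realised as empty convex polygons, and this --- rather than any obstacle encountered in proving the $k=5$ case via \thmref{GenFiveHole} --- is where I expect the principal difficulty to lie.
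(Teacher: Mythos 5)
You have not proved the statement: \conjref{BigClique} is an open conjecture, and your proposal establishes only what is already known ($k\leq 5$) while offering an admittedly incomplete plan for $k\geq 6$. The concrete gap is the inductive step. You need a point of $P\setminus C$ visible to every vertex of a $(k-1)$-clique $C$, but you supply no mechanism for producing one: as you yourself compute, the union bound on $|V_1\cap\cdots\cap V_{k-1}|$ is vacuous, and the hoped-for ``correlations'' between the sets $V_i$ are never identified. Moreover, the structural fact you propose to exploit is false: a clique in the visibility graph need not be in convex position (four points, one interior to the triangle of the other three and visible to all of them, already form a non-convex $4$-clique). Even granting convex position, the induction cannot close by producing holes, since (as you note) Horton sets have no $7$-hole; so for $k\geq 7$ the step would have to manufacture visibility cliques that are not empty polygons, which is precisely the open problem. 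Note that the paper itself claims only the case $k=5$ (\thmref{BigCliqueCase}); for general $k$ the statement remains a conjecture, so no refinement of this plan can be expected to yield a complete proof at present.

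For the part that is actually provable, your route coincides with the paper's: apply \thmref{GenFiveHole} to obtain a $5$-hole. Your shortcut --- that the five corners of any $5$-hole are already pairwise visible --- is valid under the paper's definition of hole, since $\conv(X)\cap P=X$ refers to the closed hull and strict convexity forbids three collinear corners, so every edge and diagonal of the pentagon meets $P$ only at its endpoints. The paper instead passes to a minimum-area $5$-hole inside the given one; that extra step is not needed under the stated definition, but it makes the visibility claim robust if ``hole'' is weakened to require only that the open interior contain no point of $P$.
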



\conjref{BigClique} has  recently attracted considerable attention \citep{Matousek08,Luigi,KPW-DCG05}. 
It is trivially true for $\ell\leq3$ and all $k$.
\citet{KPW-DCG05} proved it for $k\leq 4$ and all $\ell$.
\citet{Luigi} proved it in the case that $k=5$ and $\ell=4$.  
Here we prove the next case of \conjref{BigClique} for infinitely many values of $\ell$. 

\begin{theorem}
\thmlabel{BigCliqueCase}
\conjref{BigClique} is true for $k=5$ and all $\ell$.
\end{theorem}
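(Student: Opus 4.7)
My plan is to reduce Theorem~\ref{thm:BigCliqueCase} directly to Theorem~\ref{thm:GenFiveHole}. The key observation is that the five vertices of any empty pentagon already form a $K_5$ in the visibility graph, so the generalised empty pentagon theorem delivers the big-line-or-big-clique dichotomy for $k=5$ almost for free.

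First I would set $n := \ES(\frac{(2\ell-1)^{\ell}-1}{2\ell-2})$ and apply Theorem~\ref{thm:GenFiveHole} to an arbitrary point set $P$ with $|P| \geq n$. If the theorem returns $\ell$ collinear points of $P$, Conjecture~\ref{con:BigClique} holds directly, so the only case to address is when Theorem~\ref{thm:GenFiveHole} produces a $5$-hole, say with vertex set $X = \{v_1,\dots,v_5\}$ in strictly convex position and $\conv(X) \cap P = X$.

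Next I would verify that $X$ is a $5$-clique in the visibility graph of $P$. Fix an arbitrary pair $v, w \in X$; the segment $\overline{vw}$ is contained in $\conv(X)$, so $\overline{vw} \cap P \subseteq X$. Because $X$ is in \emph{strictly} convex position, no three points of $X$ are collinear, and in particular $\overline{vw} \cap X = \{v, w\}$. Therefore $\overline{vw} \cap P = \{v, w\}$, which is exactly the definition of $v$ being visible to $w$ with respect to $P$. Since the pair was arbitrary, $X$ forms the desired $5$-clique.

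There is essentially no obstacle here: the technical heart has been discharged by Theorem~\ref{thm:GenFiveHole}, and the remainder is just unwinding the definitions of $5$-hole and visibility. The only subtlety worth flagging is the use of \emph{strict} convexity of $X$, which ensures that the diagonals of the pentagon, not merely its boundary edges, are unobstructed by other points of $P$; without this, a third vertex could in principle lie on some $\overline{v_iv_j}$ and break visibility.
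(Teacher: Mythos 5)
Your proof is correct and follows essentially the same route as the paper: both reduce the statement to \thmref{GenFiveHole} and then extract a $5$-clique of the visibility graph from the resulting $5$-hole. The only difference is that the paper first passes to a minimum-area $5$-hole contained in the one provided and argues visibility by minimality, a precaution that is redundant under the paper's own definition of a $k$-hole (which requires $\conv(X)\cap P=X$ together with strict convexity), so your direct verification that the five corners are pairwise visible is sound as written.
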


\begin{proof}
By \thmref{GenFiveHole}, every sufficiently large set of points contains $\ell$ collinear points (in which case we are done) or a $5$-hole $H$. 
Let $H'$ be a $5$-hole contained in $H$ with minimum area. Then the corners of $H'$ are five pairwise visible points (otherwise there is a $5$-hole contained in $H$ with less area, as illustrated in \figref{FiveHole}).
\end{proof}

\Figure{FiveHole}{\includegraphics{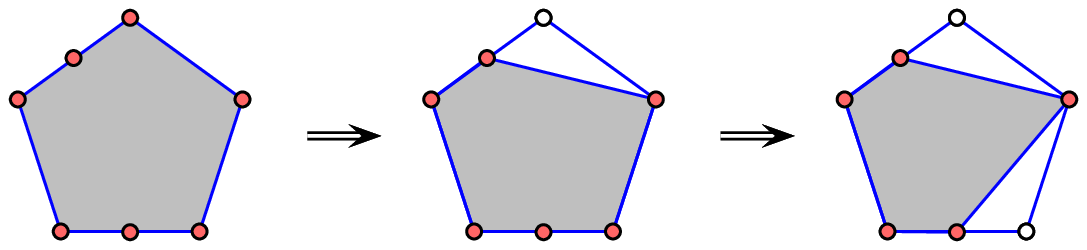}}{Every 5-hole contains five pairwise visible points.}

\section{Points in Convex Position}
\seclabel{Convex}

In this section we consider the following problem (which will be relevant to the proofs of \lemref{ErdosSzekeresConvexToStrict} and \thmref{GenFiveHole} to come): given a set $P$ of points in convex position, choose a large subset of $P$ in strictly convex position. For integers $k\geq1$ and $\ell\geq1$, let $q(k,\ell)$ be the minimum integer such that every set of at least $q(k,\ell)$ points in the plane in convex position contains $\ell$ collinear points or $k$ points in strictly convex position.  Trivially, if $k\leq2$ or $\ell\leq2$ then $q(k,\ell)=\min\{k,\ell\}$. 
Since every set of points with no three points in strictly convex position is collinear, $q(3,\ell)=\ell$ for all $\ell\geq1$. 
Since every set of points in convex position with no three collinear points is in strictly convex position, $q(k,3)=k$ for all $k\geq1$. 

\begin{lemma}
\lemlabel{ConvexToStrict}
For all $\ell\geq3$ and $k\geq3$,
\begin{equation}
\eqnlabel{ConvexToStrict}
q(k,\ell)=\begin{cases}
  \half(\ell-1)(k-1)+1& \text{, if $k$ is odd}\\
  \half(\ell-1)(k-2)+2& \text{, if $k$ is even.}
\end{cases}
\end{equation}
\end{lemma}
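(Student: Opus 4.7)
The plan is to establish both directions of the equality: a construction giving the lower bound $q(k,\ell) \geq $ (RHS), and a matching combinatorial upper bound.

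For the lower bound, I would construct extremal convex-position point sets by inflating edges of convex polygons. For odd $k = 2m + 1$ (with $m \geq 2$), let $P$ be obtained from a convex $2m$-gon by adding $\ell - 3$ interior collinear points to each of its $m$ alternating ``heavy'' edges, while leaving the other $m$ ``light'' edges with only their two corners. Then $|P| = 2m + m(\ell - 3) = m(\ell - 1) = \half(\ell-1)(k-1)$, and each line contains at most $\ell - 1$ points of $P$. To see no $k$-subset of $P$ lies in strictly convex position, observe that any such subset $X$ satisfies $|X \cap e_i| \leq 2$ per edge $e_i$; summing over the $2m$ edges gives $\sum_i |X \cap e_i| \leq 4m$, which in turn equals $2|X \cap \text{corners}| + |X \cap \text{interior}|$. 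Combined with the alternating heavy/light pattern (forcing that taking both interior points of a heavy edge excludes its two corners), a short cyclic case analysis pins $|X| \leq 2m < k$. For even $k = 2m$, the analogue uses a convex $(2m-1)$-gon with $m-1$ heavy edges; since $c = 2m - 1$ is odd, the heavy edges cannot perfectly alternate around the cycle, and one pair of adjacent light edges (a ``break'') is forced, yielding $|P| = (m-1)(\ell-1) + 1 = q(k,\ell) - 1$.

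For the upper bound, let $P$ be any convex-position set with $|P| \geq q(k, \ell)$ and no $\ell$ collinear, and write $c$ for the number of corners of $P$ and $\alpha_i$ for the number of interior points of edge $e_i$. If $c \geq k$, any $k$ corners give a strictly convex $k$-subset, so assume $c < k$. Starting with $X_0 :=$ the set of all corners (already in strictly convex position, of size $c$), I augment via ``edge swaps'': for each edge $e_i$ with $\alpha_i \geq 2$, remove its two corners from $X$ and add two of its interior points. A swap preserves $|X|$ but frees one unit of capacity in each neighbouring edge, so an additional interior point of each neighbour (if one exists) can be added, increasing $|X|$ by up to $2$. Choosing which subset of edges to swap reduces to a weighted cyclic optimization on the $c$-cycle whose optimum equals the maximum strictly convex subset size. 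A pigeonhole analysis using $|P| \geq q(k, \ell)$ shows that either $c$ is large, or the total interior count exceeds $m(\ell - 3)$, or two edges with nonzero $\alpha$ must be cyclically adjacent; in every case the optimization reaches $|X| \geq k$.

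The main obstacle will be proving tightness of the upper bound. The extremal configurations from the construction sit exactly at the boundary, so one must verify that every deviation from the alternating heavy/light pattern — extra corners, an extra interior point, or a different cyclic arrangement of heavy edges — furnishes exactly the room needed to add a $k$th point. The parity split (odd $k$ paired with even $c$ versus even $k$ paired with odd $c$, where the forced ``break'' accounts for the $+1$ difference in the formula) is where the combinatorial case analysis concentrates, and handling it without a gap of $1$ is the delicate step.
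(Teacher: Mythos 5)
Your lower-bound constructions are essentially the paper's own: for odd $k$ your ``$2m$-gon with $\ell-3$ interior points on $m$ alternating edges'' is exactly the paper's set of $\ell-1$ points on every second side of a convex $(k-1)$-gon, and the cleanest justification is simply that $P$ is covered by $\half(k-1)$ pairwise disjoint collinear segments, each contributing at most two points to any strictly convex subset; your even-$k$ variant (an odd polygon with one corner left uncovered) is a legitimate cosmetic variation of the paper's ``plus one extra point''. This half is correct.

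The upper bound, however, is a plan rather than a proof, and the plan omits precisely the step that carries all the content. Your reformulation is sound: since $P$ is in convex position, a subset $X\subseteq P$ is in strictly convex position if and only if $|X\cap e_i|\leq 2$ for every edge $e_i$ of $\conv(P)$, so the task is to show that $|P|$ at least the claimed value, together with $\alpha_i\leq\ell-3$ for every edge, forces this cyclic packing maximum to reach $k$. But the sentence asserting that ``a pigeonhole analysis \ldots shows \ldots in every case the optimization reaches $|X|\geq k$'' \emph{is} the statement to be proved, and no argument is given; you yourself flag the tightness of the bound and the parity bookkeeping behind the $+1$ versus $+2$ as an unresolved obstacle. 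The swap heuristic also has unexamined interactions (two adjacent swapped edges share a removed corner, so the ``up to $2$'' gain per swap cannot simply be accumulated around the cycle). For contrast, the paper disposes of this half by induction on $k$ with three cases on the edge multiplicities $|P_i|$: if some edge carries at least four points, delete that edge, apply induction with $k-2$, and restore two of its interior points; if some edge carries exactly two points, delete six consecutive points, apply induction with $k-4$, and restore a strictly convex quadruple; and if every edge carries exactly three points, a direct count of the non-corners plus alternate corners gives a numerical contradiction. Some argument of that completeness is required; as written, the upper bound --- the harder direction of the lemma --- is not established.
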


\begin{proof}
Let $f(k,\ell)$ denote the right-hand-side of \eqnref{ConvexToStrict}.

We first prove the lower bound on $q(k,\ell)$ for odd $k\geq5$, the case $k=3$ having been proved above. 
As illustrated in \figref{Even}(a), 
let $P$ be a set consisting of $\ell-1$ points on every second side of a convex $(k-1)$-gon.
Thus $P$ has $\half(k-1)(\ell-1)$ points with no $\ell$ collinear points and no $k$ in strictly convex position 
(since at most two points from each side are in strictly convex position).
Hence $q(k,\ell)>\half(\ell-1)(k-1)$, which is an integer. Thus $q(k,\ell)\geq\half(\ell-1)(k-1)+1=f(k,\ell)$.

Now we prove the lower bound on $q(k,\ell)$ for even $k\geq4$. 
For $k=4$, a set of $\ell-1$ collinear points plus one point off the line has no four points in strictly convex position; hence $q(4,\ell)\geq\ell+1$. 
Now assume $k\geq6$. As illustrated in \figref{Even}(b), 
let $P$ be a set consisting of $\ell-1$ points on every second side of a convex $(k-2)$-gon, plus one more point not collinear with any two other points.
Thus $P$ has $\half(\ell-1)(k-2)+1$ points with no $\ell$ collinear points and no $k$ in strictly convex position 
(since at most two points from each `long' side are in strictly convex position plus the one extra point).
Hence $q(k,\ell)>\half(\ell-1)(k-2)+1$, which is an integer. Thus $q(k,\ell)\geq\half(\ell-1)(k-2)+2=f(k,\ell)$.

\Figure{Even}{\includegraphics{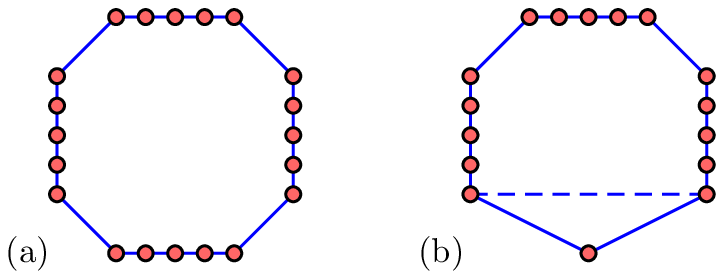}}{Extremal examples for $\ell=6$ and (a) $k=9$ and (b) $k=8$.}

We now prove the upper bound $q(k,\ell)\leq f(k,\ell)$ for $\ell\geq3$ and $k\geq1$.
We proceed by induction on $k\geq1$.  
The cases $k\in\{1,2,3\}$ or $\ell=3$ follow from the discussion at the start of the section. Now assume that $k\geq4$ and $\ell\geq4$.
Let $P$ be a set of at least $f(k,\ell)$ points in convex position with no $\ell$ collinear points and no $k$ points in strictly convex position.
Let $v_1,\dots,v_m$ be the corners of $P$ in clockwise order, where $v_{m+1}:=v_1$ and $v_0:=v_m$.
Let $P_i:=P\cap\overline{v_iv_{i+1}}$ for each $i\in[m]$.
Thus $|P_i|\in[2,\ell-1]$ for each $i\in[m]$.

Suppose that $|P_i|\geq4$ for some $i\in[m]$. Thus
$|P-P_i|\geq f(k,\ell)-(\ell-1)=f(k-2,\ell)$.
By induction, $P-P_i$ has a  subset $S$ of $k-2$ points in strictly convex position (since $P$ and thus $P-P_i$ has no $\ell$ collinear points). 
Thus $S$ plus two internal points on $P_i$ form a  subset of $k$ points in strictly convex position, which is a contradiction.
Now assume that $|P_i|\leq3$ for all $i\in[m]$.

Suppose that $|P_i|=2$ for some $i\in[m]$.
Say $t,u,v,w,x,y$ are the consecutive points on the boundary of $\conv(P)$, where $P_i=\{v,w\}$.
Since $\{u,v,w,x\}$ are in strictly convex position, assume that $k\geq5$.
Thus $|P-\{t,u,v,w,x,y\}|\geq f(k,\ell)-6\geq f(k-4,\ell)$.
By induction, $P-\{t,u,v,w,x,y\}$ has a  subset $S$ of $k-4$ points in strictly convex position (since $P$ and thus $P-\{t,u,v,w,x,y\} $ has no $\ell$ collinear points). 
Since $|P_{i-1}|\leq3$ and $|P_{i+1}|\leq3$, it follows that $S\cup\{u,v,w,x\}$ is a set of $k$ points in strictly convex position, which is a contradiction.

Now assume that $|P_i|=3$ for all $i\in[m]$.
Thus $|P|=2m$. 
As illustrated in \figref{Pi3}, let $S$ consist of each of the $m$ non-corner points of $P$, plus every second corner point 
(where in the case of odd $m$, we omit two consecutive corners from $S$). 
Thus $S$ is a set of at least $\frac{1}{2}(3m-1)$ points in strictly convex position. 
We have $|P| \ge f(k,\ell)$, which, since $\ell \geq 4$, is at least $\frac{3}{2}k-1$.
Since no $k$ points are in strictly convex position, $|S| \leq k-1$ and
$$8(k-1)\geq 8|S|\geq12m-4=6|P|-4\geq 6(\tfrac{3}{2}k-1)-4=9k-10\enspace,$$ 
implying $k\leq2$, which is a contradiction.
\end{proof}

\Figure{Pi3}{\includegraphics{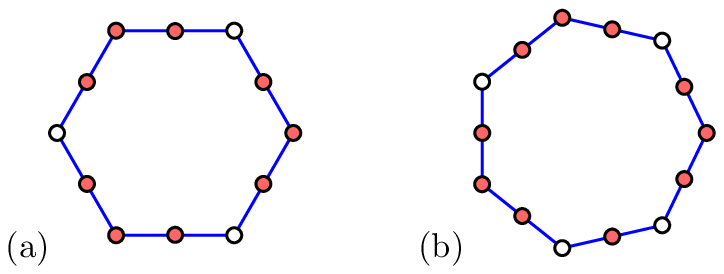}}{The case $|P_i|=3$ for all $i\in[m]$ where (a) $m=6$ and (b) $m=7$. Dark points are in $S$.}

\section{Generalisations of the Erd\H{o}s-Szekeres Theorem}
\seclabel{ErdosSzekeres}

In this section we prove  \twothmref{ErdosSzekeres}{StrictErdosSzekeres}, which generalise the Erd\H{o}s-Szekeres Theorem for points in general position.
If $P'$ is a perturbation of a finite set $P$ of points in the plane, 
let $v'\in P'$ denote the image of a point $v\in P$, and let $S':=\{v':v\in S\}$ for each $S\subseteq P$.
If $\dist(v,v')\leq\epsilon$ for each $v\in P$ then $P'$ is an \emph{$\epsilon$-perturbation}.
Observe that \thmref{ErdosSzekeres} follows from the next lemma and the Erd\H{o}s-Szekeres Theorem for points in general position (applied to $P'$).

\begin{lemma}
\lemlabel{Perturbation}
For every finite set $P$ of points in the plane, 
there is a general position perturbation $P'$ of $P$, 
such that if $S'$ is a subset of $P'$ in convex position, then
$S$ is in (non-strict) convex position.
\end{lemma}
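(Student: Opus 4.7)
The plan is to choose a perturbation small enough to preserve "strict interior membership" for every point of $P$ relative to every subset of $P$, yet generic enough to break every collinearity. The conclusion then follows by contrapositive: if $S$ is not in convex position, some $v\in S$ lies strictly inside $\conv(S)$, and for such a small perturbation the image $v'$ will still lie strictly inside $\conv(S')$, so $S'$ is not in convex position either.

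For the threshold I would set $\delta := \min \dist(v,\partial\conv(S))$, the minimum taken over the (finite) collection of pairs $(S,v)$ with $S\subseteq P$ and $v$ strictly interior to $\conv(S)$; this is a minimum of finitely many positive numbers, hence positive (if the collection is empty, any $\delta>0$ works). Fix any $\epsilon$ with $0<\epsilon<\delta/3$. To put the perturbed set in general position, work inside the nonempty open set $\prod_{v\in P}B(v,\epsilon)\subseteq(\mathbb{R}^2)^{|P|}$: for each ordered triple of points of $P$, the condition that the perturbed images be collinear is a proper algebraic subvariety, so the finite union of these subvarieties over all triples has measure zero. Choose any $P'$ in the complement of this union; then no three points of $P'$ are collinear and $\|v'-v\|<\epsilon$ for every $v\in P$.

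For the key verification, assume $S\subseteq P$ is not in convex position and pick $v\in S$ with $v\in\operatorname{int}(\conv(S))=\operatorname{int}(\conv(S-v))$. Because $v$ is strictly interior, it has strictly positive barycentric coordinates in some triangle $\Delta(s_1,s_2,s_3)$ with $s_i\in S-v$ whose "thickness" around $v$ is comparable to $\delta$. Under an $\epsilon$-perturbation, those barycentric coordinates vary continuously, so $v'$ still has strictly positive barycentric coordinates in the perturbed triangle $\Delta(s_1',s_2',s_3')\subseteq\conv(S'-v')$, placing $v'$ in $\operatorname{int}(\conv(S'-v'))\subseteq\operatorname{int}(\conv(S'))$ and showing that $S'$ is not in convex position.

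The main obstacle is making that continuity step quantitative: one must choose the triangle $\Delta(s_1,s_2,s_3)$ so that $v$'s barycentric coordinates are bounded away from zero by a quantity controlled by $\delta$ and the diameter of $P$, so that a single $\epsilon$ works uniformly across all subsets $S$. A cleaner alternative is to bypass triangles altogether and use the elementary bound $d_H(\conv(S-v),\conv(S'-v'))\le\epsilon$, combined with $\dist(v,\partial\conv(S-v))\ge\delta$, to conclude directly that $v'$ remains strictly inside $\conv(S'-v')$ whenever $\epsilon$ is small relative to $\delta$.
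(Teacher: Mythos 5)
Your proof is correct, but it takes a genuinely different route from the paper's. The paper works entirely with orientations: it picks $\mu>0$ small enough that every non-collinear triple of $P$ keeps its orientation under any $\mu$-perturbation, takes $P'$ to be any such perturbation in general position, and then argues directly that if $S'$ is in convex position, every consecutive triple of $S'$ (in cyclic order) is positively oriented, so the corresponding triple of $S$ cannot be negatively oriented (a negative triple would have stayed negative), hence is non-negative, which is exactly non-strict convex position. Your argument is instead the contrapositive with a metric witness: a violation of convex position is certified by a point $v$ in the open interior of $\conv(S)$, you quantify its depth $\delta$ uniformly over the finitely many pairs $(S,v)$, and you show the depth survives an $\epsilon$-perturbation via the bound $d_H(\conv(A),\conv(A'))\le\epsilon$. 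The paper's version buys a purely combinatorial proof with no distance estimates beyond the single threshold, and it shows the slightly stronger fact that any order-type-preserving perturbation works; your version is more quantitative and makes the ``interior points stay interior'' intuition explicit, and your measure-zero argument for achieving general position is a clean substitute for the paper's unexplained ``let $P'$ be a $\mu$-perturbation in general position.'' Two small polish points: your first route (barycentric coordinates in a well-chosen triangle) is, as you note, not fully quantified, so you should commit to the Hausdorff alternative; and even there the step from $\dist(v,\partial\conv(S-v))\ge\delta$ and $d_H\le\epsilon$ to ``$v'$ is interior to $\conv(S'-v')$'' deserves one line (e.g.\ a supporting-line argument showing that the $\delta$-ball around $v$ inside $\conv(S-v)$ forces the $(\delta-\epsilon)$-ball to lie in $\conv(S'-v')$), since it uses convexity of the perturbed hull.
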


\begin{proof}
For each non-collinear ordered triple $(u,v,w)$ of points in $P$ there exists $\mu>0$ such that every $\epsilon$-perturbation of $P$ will not change the orientation\footnote{The \emph{orientation} of an  ordered triple of points $(u,v,w)$ is $0$ if $u,v,w$ are collinear; otherwise it is \emph{positive} or \emph{negative} depending on whether we turn left or right when going from $u$ to $w$ via $v$.} of $(u,v,w)$ whenever $0<\epsilon<\mu$. 
Since there are finitely many such triples there is a minimal such $\mu$.
Let $P'$ be a $\mu$-perturbation of $P$ in general position.

Let $S'$ be a subset of $P'$ in convex position.
Consider $S'$ in  anticlockwise order.  
Thus each ordered triple of consecutive points in $S'$ has positive orientation.
Now consider $S$ in the corresponding order as $S'$.
Since the perturbation preserved negatively oriented triples, 
each ordered triple of consecutive points in $S$ has non-negative orientation.
That is, $S$ is in (non-strict) convex position, as desired.
\end{proof}




We now prove two lemmas, each of which shows how to force $k$ points in strictly convex position, thus proving \thmref{StrictErdosSzekeres}.

\begin{lemma}
\lemlabel{ErdosSzekeresConvexToStrict} 
For all $k\geq3$ and $\ell\geq3$, if $k$ is odd then
$$\ES(k,\ell)\leq\ES(\half(k-1)(\ell-1)+1)\enspace,$$
and if $k$ is even then
$$\ES(k,\ell)\leq\ES(\half(k-2)(\ell-1)+2)\enspace.$$
\end{lemma}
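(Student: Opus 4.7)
The plan is to combine the generalised Erd\H{o}s-Szekeres Theorem (\thmref{ErdosSzekeres}) with the tight bound on $q(k,\ell)$ from \lemref{ConvexToStrict}. Observe that the two right-hand sides in the statement are precisely the values of $q(k,\ell)$ computed in \eqnref{ConvexToStrict}: $q(k,\ell)=\half(k-1)(\ell-1)+1$ when $k$ is odd and $q(k,\ell)=\half(k-2)(\ell-1)+2$ when $k$ is even. So the claim to prove reduces to the uniform inequality
\[
\ES(k,\ell)\leq\ES(q(k,\ell))
\]
for $k\geq3$ and $\ell\geq3$.

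To establish this, I would let $P$ be any set of at least $\ES(q(k,\ell))$ points in the plane. By \thmref{ErdosSzekeres}, $P$ contains a subset $Q$ of $q(k,\ell)$ points in (non-strict) convex position. Now apply \lemref{ConvexToStrict} to $Q$: since $|Q|\geq q(k,\ell)$ and $Q$ is in convex position, $Q$ contains either $\ell$ collinear points or $k$ points in strictly convex position. In either case, this conclusion holds for the ambient set $P$ as well, which is exactly what the definition of $\ES(k,\ell)$ requires.

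There is essentially no obstacle here beyond lining up the definitions correctly — the real content was already done in \lemref{ConvexToStrict}, where the extremal value $q(k,\ell)$ was pinned down, and in \thmref{ErdosSzekeres}, where convex (not strictly convex) subsets were produced in point sets with collinearities. The only subtlety worth flagging is that \thmref{ErdosSzekeres} is applied with parameter $q(k,\ell)$, not $k$, so the intermediate convex subset may be much larger than the final strictly convex subset we ultimately extract; the extra slack is consumed by the collinear points that \lemref{ConvexToStrict} is designed to handle. No separate argument is needed for the odd and even cases beyond substituting the appropriate expression for $q(k,\ell)$.
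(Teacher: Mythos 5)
Your proposal is correct and follows essentially the same route as the paper: apply \thmref{ErdosSzekeres} to extract $q(k,\ell)$ points in convex position and then invoke \lemref{ConvexToStrict}. The only cosmetic difference is that the paper assumes upfront that $P$ has no $\ell$ collinear points and concludes $k$ points in strictly convex position, whereas you carry the dichotomy through to the end; both are equivalent given the definition of $\ES(k,\ell)$.
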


\begin{proof}
For odd $k$, let $P$ be a set of at least $\ES(\half(k-1)(\ell-1)+1)$ points with no $\ell$ points collinear.
Thus $P$ contains $\half(k-1)(\ell-1)+1$ points in convex position  by \thmref{ErdosSzekeres}.
Thus $P$ contains $k$ points in strictly convex position by \lemref{ConvexToStrict}.
The proof for even $k$ is analogous.
\end{proof}

\begin{lemma}
\lemlabel{ErdosSzekeresGenPos}
For all $k\geq3$ and $\ell\geq3$,
$$\ES(k,\ell)\leq(\ell-3)\binom{\ES(k)-1}{2}+\ES(k)\enspace.$$
\end{lemma}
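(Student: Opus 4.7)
The plan is to extract from $P$ a subset of $\ES(k)$ points in general position and then apply the classical Erdős--Szekeres theorem; general position automatically upgrades ``convex'' to ``strictly convex'', so the $k$ points it produces are of the required type. Let $P$ be a set with $|P| \geq (\ell-3)\binom{\ES(k)-1}{2} + \ES(k)$. If some line contains $\ell$ points of $P$ we are done, so assume no line meets $P$ in $\ell$ or more points.

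I would build the required general-position subset $S \subseteq P$ greedily. Suppose $S$ has been constructed in general position with $|S| = s < \ES(k)$. A point $p \in P \setminus S$ fails to preserve general position when added to $S$ precisely when $p$ lies on the line through some pair $\{u,v\} \subseteq S$. For each such pair, $\overline{uv} \cap P$ has cardinality at most $\ell - 1$ by hypothesis; since $S$ is itself in general position, $u$ and $v$ are the only members of $S$ on that line, so the pair blocks at most $\ell - 3$ points of $P \setminus S$. Summing over the $\binom{s}{2}$ pairs, at most $(\ell-3)\binom{s}{2}$ points of $P \setminus S$ are blocked, so a valid extension exists whenever $|P| > s + (\ell-3)\binom{s}{2}$.

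Because the right-hand side is monotone in $s$, it suffices to verify this inequality at $s = \ES(k)-1$, where it reduces to exactly the standing hypothesis $|P| \geq \ES(k) + (\ell-3)\binom{\ES(k)-1}{2}$. The greedy procedure therefore reaches $|S| = \ES(k)$, and applying the classical Erdős--Szekeres theorem to the general-position set $S$ yields $k$ points in (strictly) convex position, as required. The only point deserving care is the saving from $\ell - 1$ to $\ell - 3$ in the per-pair count of forbidden points, which is what makes the bound match the statement; this saving is legitimate precisely because the invariant ``$S$ is in general position'' is maintained at every stage, so $u$ and $v$ are the only members of $S$ on their common line and can safely be subtracted from the count of outside candidates.
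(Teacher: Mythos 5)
Your proposal is correct and is essentially the paper's own argument: the paper's footnote takes a \emph{maximal} general-position subset $S$ and observes that every remaining point lies on one of the $\binom{|S|}{2}$ lines, each carrying at most $\ell-3$ points of $P-S$, which is exactly your greedy augmentation bound read at its terminal stage; both proofs then feed the resulting $\ES(k)$ points in general position into the classical Erd\H{o}s--Szekeres theorem. The arithmetic and the upgrade from convex to strictly convex position via general position both check out.
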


\begin{proof}
It is well known 
\citep{Erdos86,Erdos88,Erdos89,Furedi-SJDM91,Brass03} and easily proved\footnote{Let $P$ be a set of points in the plane with at most $\ell-1$ points collinear and at most $k-1$ points in general position. Let $S\subseteq P$ be a maximal set of points in general position. Thus every point in $P-S$ is collinear with two points in $S$. The set $S$ determines $\binom{|S|}{2}$ lines, each with at most $\ell-3$ points in $P-S$. Thus $|P|\leq\binom{|S|}{2}(\ell-3)+|S|\leq\binom{k-1}{2}(\ell-3)+k-1$.
That is, if $|P|\geq\binom{k-1}{2}(\ell-3)+k$ then $P$ contains $\ell$ collinear points or $k$ points in general position.} that every set of at least $(\ell-3)\binom{k-1}{2}+k$ points in the plane contains $\ell$ collinear points or $k$ points in general position. 
Thus every set $P$ of at least $(\ell-3)\binom{\ES(k)-1}{2}+\ES(k)$ points in the plane contains $\ell$ collinear points or  $\ES(k)$ points in general position. In the latter case, $P$ contains $k$ points in strictly convex position. 
\end{proof}

The best known upper bound on $\ES(k)$, due to \citet{TothValtr05}, is
$$\ES(k)\leq\binom{2k-5}{k-2}+1\in O\left(\frac{2^{2k}}{\sqrt{k}}\right)\enspace.$$
Thus \lemref{ErdosSzekeresConvexToStrict} implies that if $k$ is odd then
\begin{equation}
\eqnlabel{ErdosSzekeresConvexToStrictOdd}
\ES(k,\ell)\in 
 O\left(\frac{2^{(k-1)(\ell-1)}}{\sqrt{k\ell}}\right)\enspace,
\end{equation}
and if $k$ is even then
\begin{equation}
\eqnlabel{ErdosSzekeresConvexToStrictEven}
\ES(k,\ell)\in 
  O\left(\frac{2^{(k-2)(\ell-1)}}{\sqrt{k\ell}}\right)\enspace.
\end{equation}
Similarly, \lemref{ErdosSzekeresGenPos} implies that 
\begin{equation}
\eqnlabel{ErdosSzekeresGenPos}
\ES(k,\ell)\in O\left(\frac{\ell\cdot 2^{4k}}{k}\right)\enspace.
\end{equation}
Note that the bound in \eqnref{ErdosSzekeresGenPos} is stronger than the bounds in \twoeqnref{ErdosSzekeresConvexToStrictOdd}{ErdosSzekeresConvexToStrictEven} for $\ell\geq6$ and sufficiently large $k$.  For $\ell\leq 5$ the 
bounds in \twoeqnref{ErdosSzekeresConvexToStrictOdd}{ErdosSzekeresConvexToStrictEven} are stronger.

\section{Empty Quadrilaterals}
\seclabel{Quads}

Point sets with no $4$-hole are characterised as follows.

\begin{theorem}[\citep{DESW-CGTA,Eppstein-SOCG07}]
\lemlabel{Eppstein}
The following are equivalent for a finite set of points $P$:
\begin{enumerate}[(a)]
\item $P$ contains no $4$-hole,
\item the visibility graph of $P$ is crossing-free,
\item $P$ has a unique triangulation,
\item at least one of the following conditions hold:
\begin{itemize}
\item all the points in $P$, except for at most one, are collinear; see \figsref{PlanarVisGraphs}(a) and (b),
\item there are two points $v,w\in P$ on opposite sides of some line $L$, such that $P-\{v,w\}\subseteq L$ and the intersection of $\conv(P-\{v,w\})$ and $\overline{vw}$ either is a point in $P-\{v,w\}$ or is empty; see \figsref{PlanarVisGraphs}(c) and (d),
\item $P$ is a set of six points with the same order type as the set illustrated in \figref{PlanarVisGraphs}(e).
\end{itemize}
\end{enumerate}
\end{theorem}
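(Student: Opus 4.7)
The plan is to show that conditions (b), (c), and (d) are each equivalent to (a). The implications (b) $\Rightarrow$ (a), (c) $\Rightarrow$ (a), and (d) $\Rightarrow$ (a), as well as the converses (a) $\Rightarrow$ (b) and (a) $\Rightarrow$ (c), are all relatively short; the real work is the structural implication (a) $\Rightarrow$ (d).

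For (a) $\Leftrightarrow$ (b), the direction (b) $\Rightarrow$ (a) is immediate: the two diagonals of any $4$-hole $abcd$ are crossing visibility edges by emptiness and convex position. For (a) $\Rightarrow$ (b), I would argue by contrapositive. If visibility edges $\overline{vw}$ and $\overline{xy}$ cross, then $\{v,x,w,y\}$ is in strictly convex position with $\overline{vw}$ and $\overline{xy}$ its diagonals. Among all such crossing pairs of visibility edges in $P$, choose one minimising the area of $\conv\{v,x,w,y\}$. Any point $p \in P$ in the open interior of this quadrilateral would lie in exactly one of the four open triangles cut off by the crossing, and then $p$ together with three of $\{v,x,w,y\}$ would furnish a crossing pair of visibility edges spanning strictly smaller area, a contradiction. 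Hence $\conv\{v,x,w,y\}$ is a $4$-hole.

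For (a) $\Leftrightarrow$ (c), I would invoke the standard connectivity of the flip graph of triangulations of $P$, combined with the observation that an interior edge of a triangulation is flippable exactly when its two incident triangles form an empty strictly convex quadrilateral, i.e. a $4$-hole. Thus $P$ has a unique triangulation iff no triangulation of $P$ has a flippable edge iff $P$ contains no $4$-hole. For (d) $\Rightarrow$ (a) I would inspect each of the three listed structural types: in the "all but one collinear" case, any four points contain three collinear; in the "two points on opposite sides" case, the hypothesis on $\overline{vw}\cap\conv(P-\{v,w\})$ is exactly what is needed to prevent assembling an empty strictly convex quadrilateral from two collinear points and $v,w$; and the exceptional six-point configuration of \figref{PlanarVisGraphs}(e) is small enough that one can check all $\binom{6}{4}=15$ quadruples by hand.

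The main obstacle is the implication (a) $\Rightarrow$ (d). My plan is to let $L$ be a line containing the maximum number of points of $P$, set $Q := P \setminus L$, and case on $|Q|$. If $|Q|\le 1$ one lands in the first listed configuration. If $|Q|=2$, the no-$4$-hole hypothesis forces the two points of $Q$ to lie on opposite sides of $L$ (otherwise two suitable points of $P\cap L$ together with the two points of $Q$ bound a $4$-hole), and similarly forces $\overline{vw}\cap\conv(P\cap L)$ to be either empty or a single point of $P\cap L$, giving the second configuration. The hardest subcase is $|Q|\ge 3$: here the no-$4$-hole hypothesis is extremely restrictive, and I would argue that any such $P$ must have exactly six points with the order type of \figref{PlanarVisGraphs}(e). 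This final subcase is where the bulk of the combinatorial content lies, requiring a careful case analysis on the cyclic positions of the points of $Q$ relative to $L$ and to $\conv(P)$, together with repeated use of the maximality of $L$ to force extra collinearities whenever too many distinct order types remain. This is the step I expect to be the most delicate to fully formalise.
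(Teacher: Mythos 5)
First, note that the paper does not prove this theorem at all: it is imported from \citep{DESW-CGTA,Eppstein-SOCG07} as a black box, so there is no in-paper argument to compare yours against. Judged on its own terms, your proposal correctly identifies the architecture of such a proof (the cheap equivalences $(a)\Leftrightarrow(b)\Leftrightarrow(c)$ and $(d)\Rightarrow(a)$, with all the substance in $(a)\Rightarrow(d)$), but the one implication that carries the content is exactly the one you do not prove. For $|Q|\ge 3$ you only describe the shape of a case analysis and concede it is ``the most delicate to fully formalise''; that is the entire theorem, and incidentally it is the only implication this paper actually uses (Corollary~\ref{cor:Quadrilateral} needs precisely $(a)\Rightarrow(d)$). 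Moreover, your $|Q|=2$ step is already wrong as stated: taking $L$ to be a line with the maximum number of points does not force the two off-line points to opposite sides. For $P=\{(0,0),(1,0),(2,0),(1,1),(1,2)\}$ the horizontal axis is a maximum line, the two points off it lie on the same side, and yet $P$ has no $4$-hole; the line $L$ witnessing condition (d) is the vertical line $x=1$, a different maximum line. So the dichotomy cannot be run off an arbitrary maximum-multiplicity line, and the ``otherwise two suitable points of $P\cap L$ together with the two points of $Q$ bound a $4$-hole'' claim fails for that choice.

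Two smaller gaps in the parts you do argue. In $(a)\Rightarrow(b)$, your minimal-area crossing pair only excludes points of $P$ from the \emph{open interior} of $\conv\{v,x,w,y\}$, whereas a $4$-hole also forbids points on the open sides $\overline{vx},\overline{xw},\overline{wy},\overline{yv}$; and when you replace a corner by an interior point $p$, the new diagonal (e.g.\ $\overline{pw}$) is not known to be a visibility edge, so you have not produced a smaller \emph{crossing pair of visibility edges} — this needs either a different minimisation (e.g.\ over all strictly convex quadruples, handling the collinear degeneracies) or an extra extremal choice of $p$. In $(a)\Leftrightarrow(c)$ you should at least note that flip-graph connectivity is being invoked for point sets \emph{with collinearities}, where ``flippable'' must be read as the two incident triangles forming a \emph{strictly} convex quadrilateral; the statement is true but it is not the textbook general-position version. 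None of these observations rescues the proposal: without a genuine proof of $(a)\Rightarrow(d)$ — in particular, the classification showing that $|Q|\ge3$ collapses to the single six-point order type of \figref{PlanarVisGraphs}(e) — this remains a plan rather than a proof.
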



\Figure{PlanarVisGraphs}{\includegraphics{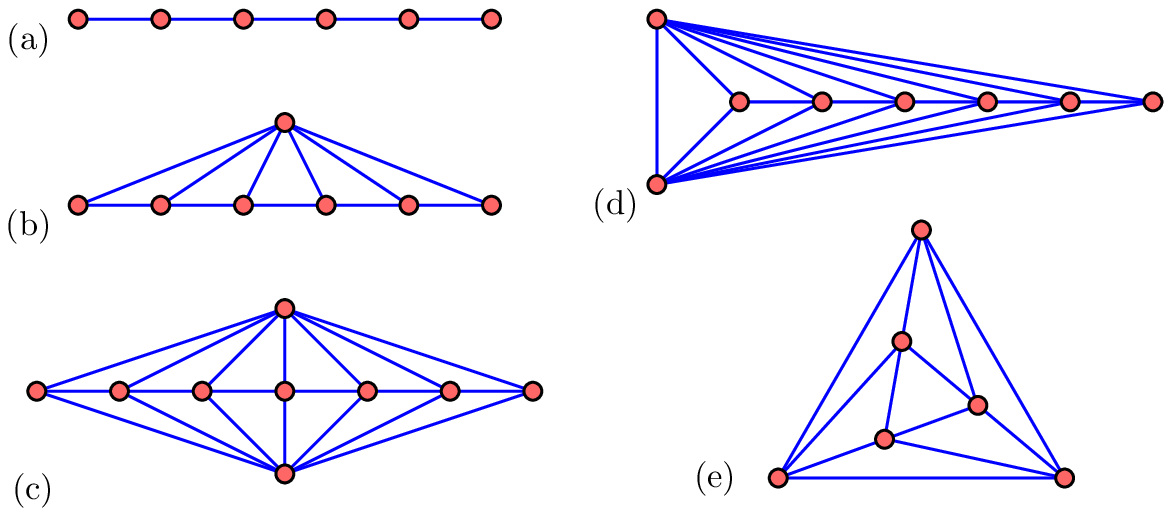}}{The point sets with no $4$-hole.} 

\begin{corollary}
\corlabel{Quadrilateral}
For every integer $\ell\geq2$, every set of at least $\max\{7,\ell+2\}$ points in the plane contains $\ell$ collinear points or a $4$-hole.
\end{corollary}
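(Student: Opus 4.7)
The plan is to apply the characterization of point sets with no $4$-hole (\lemref{Eppstein}) directly and check each case against the hypothesis $|P|\geq\max\{7,\ell+2\}$.

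First I would assume, for contradiction, that $P$ is a set of at least $\max\{7,\ell+2\}$ points containing no $\ell$ collinear points and no $4$-hole. Since $P$ has no $4$-hole, \lemref{Eppstein} forces $P$ into one of the three structural configurations listed in part~(d) of that theorem. The bound $\max\{7,\ell+2\}\geq 7$ is designed precisely to rule out the sporadic six-point configuration of \figref{PlanarVisGraphs}(e), so only the first two configurations remain to be analyzed.

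Next I would handle each surviving configuration in turn. In the configuration where all points of $P$ except possibly one lie on a single line (cases (a) and (b) of \figref{PlanarVisGraphs}), there are at least $|P|-1\geq \ell+1$ collinear points, contradicting the assumption. In the configuration of cases (c) and (d), there exist points $v,w\in P$ on opposite sides of a line $L$ with $P\setminus\{v,w\}\subseteq L$; then $L$ contains at least $|P|-2\geq\ell$ points of $P$, again a contradiction. This exhausts all possibilities and completes the argument.

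There is no real obstacle here: the heavy lifting has been done by \lemref{Eppstein}, and the proof is essentially a bookkeeping check that the constant $\max\{7,\ell+2\}$ simultaneously (i) exceeds $6$ so as to exclude configuration (e), and (ii) exceeds $\ell+1$ so that configurations (a)/(b) force $\ell$ collinear points, and (iii) is at least $\ell+2$ so that configurations (c)/(d) also force $\ell$ collinear points on the line $L$. The only minor subtlety worth flagging is that in cases (c)/(d) one must count carefully: exactly two points ($v$ and $w$) lie off $L$, so the collinear count on $L$ is $|P|-2$, which is why the $\ell+2$ term (rather than $\ell+1$) appears in the statement.
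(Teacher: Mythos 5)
Your proof is correct and is exactly the argument the paper intends: the corollary is stated without an explicit proof as an immediate consequence of the characterization theorem, and your case analysis (configuration (e) excluded by $|P|\geq 7$, configurations (a)/(b) giving $|P|-1\geq\ell+1$ collinear points, configurations (c)/(d) giving $|P|-2\geq\ell$ points on $L$) is precisely the bookkeeping that justifies the constant $\max\{7,\ell+2\}$.
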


\corref{Quadrilateral} enables a third proof of \thmref{StrictErdosSzekeres}: 
By the $2$-colour Ramsey Theorem for hypergraphs (see \citep{RamseyTheory}), for every integer $t$ there is an integer $n$ such that  for every $2$-colouring of the edges of any complete $4$-uniform hypergraph on at least $n$ vertices, there is a set $X$ of $t$ vertices such that the edges induced by $X$ are monochromatic. 
Apply this result with $t:=\max\{7,k,\ell+2\}$. We claim that $\ES(k,\ell)\leq n$. 
Let $P$ be a set of at least $n$ points in the plane with no $\ell$ collinear points.
Let $G$ be the complete $4$-uniform hypergraph with vertex set $P$.
For each $4$-tuple $T$ of vertices, colour the edge $T$ \emph{blue} if $T$ forms a strictly convex quadrilateral, and \emph{red} otherwise.
Thus there is a set $X$ of $t$ points such that the edges induced by $X$ are monochromatic. 
If all the edges induced by $X$ are red, then no $4$-tuple of points in $X$ forms a strictly convex quadrilateral, which contradicts \corref{Quadrilateral} since $|X|\geq\max\{7,\ell+2\}$. 
Otherwise, all the edges induced by $X$ are blue.
That is, every $4$-tuple of vertices in $X$ forms a strictly convex quadrilateral.
This implies that $X$ forms a strictly convex $t$-gon (for otherwise some non-corner in $X$ would be in a triangle of corners of $X$, implying there is a $4$-tuple of points in $X$ that do not form a
strictly convex quadrilateral). Since $t\geq k$ we are done.

\section{Empty Pentagons}
\seclabel{Pentagons}

In this section, we prove our main result, \thmref{GenFiveHole}. The proof loosely follows the proof of the $6$-hole theorem for points in general position by \citet{Valtr-EmptyHexagon}, which in turn is a simplification of the proof by \citet{Gerken-DCG08}. 

\begin{proof}[Proof of \thmref{GenFiveHole}]
Fix $\ell\geq3$ and let $k:=\frac{(2\ell-1)^{\ell}-1}{2\ell-2}$, which is an integer.

Let $P$ be a set of at least $\ES(k)$ points in the plane.
By \thmref{ErdosSzekeres}, $P$ contains $k$ points in convex position.
Suppose for the sake of contradiction that $P$ contains no $\ell$ collinear points and no $5$-hole.

A set $X$ of at least $k$ points in $P$ in convex position is said to be  \emph{$k$-minimal} if there is no set $Y$ of at least $k$ points in $P$ in convex position, such that $\conv(Y)\subsetneq\conv(X)$.

As illustrated in \figref{Layers}, let $A_1$ be a $k$-minimal subset of $P$.
Let $A_2,\dots,A_{\ell-1}$ be the convex layers inside $A_1$.
More precisely, for $i=2,\dots,\ell-1$, let $A_i$ be the set of points in $P$ on the boundary of the convex hull of $(P\cap \conv(A_{i-1}))-A_{i-1}$.
Let $A_{\ell}:=(P\cap \conv(A_{\ell-1}))-A_{\ell-1}$.

\Figure{Layers}{\includegraphics{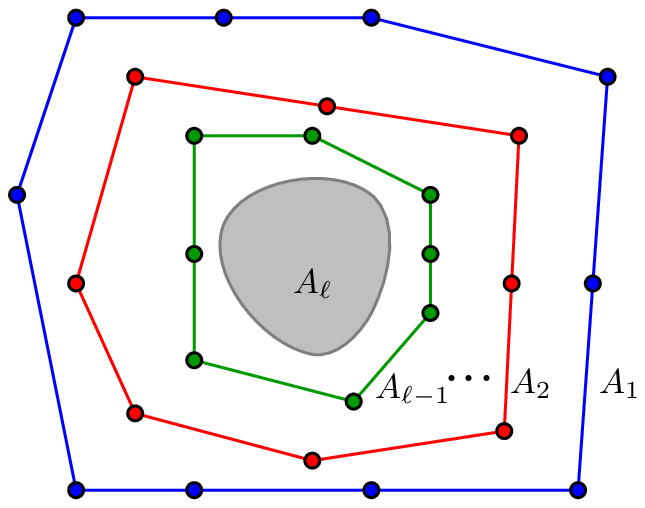}}{Definition of $A_1,\dots,A_{\ell}$.}

By \lemref{ConvexToStrict} with $k=5$, for each $i\in[2,\ell]$, 
any $2\ell-1$ consecutive points of $A_{i-1}$ contains five points in strictly convex position. 
Thus the convex hull of any $2\ell-1$ consecutive points of $A_{i-1}$ contains a point in $A_i$, as otherwise it would contain a $5$-hole. 
Now $A_{i-1}$ contains $\FLOOR{\frac{|A_{i-1}|}{2\ell-1}}$ disjoint subsets, each consisting of $2\ell-1$ consecutive points, and the convex hull of each subset contains a point in $A_i$. Since the convex hulls of these subsets of $A_{i-1}$ are disjoint,
$$|A_i|\geq\FLOOR{\frac{|A_{i-1}|}{2\ell-1}}>\frac{|A_{i-1}|}{2\ell-1}-1\enspace,$$
implying
\begin{equation}
\eqnlabel{ConsecutiveLayers}
|A_{i-1}|<(2\ell-1)(|A_i|+1)\enspace. 
\end{equation}

Suppose that $A_i=\varnothing$ for some $i\in[2,\ell]$. 
By \eqnref{ConsecutiveLayers}, $|A_{i-1}| <2\ell-1$ and $|A_{i-2}|<(2\ell-1)^2+(2\ell-1)$, and by induction,
$$|A_1|<\sum_{j=1}^{i-1}(2\ell-1)^j<\frac{(2\ell-1)^{i}-1}{2\ell-2}\leq\frac{(2\ell-1)^{\ell}-1}{2\ell-2}=k\enspace,$$ which is a contradiction.
Now assume that $A_i\neq\varnothing$ for all $i\in[\ell]$.
Fix a  point $z\in A_{\ell}$.

Note that if $|A_i|\leq2$ for some $i\in[\ell-1]$ then $A_{i+1}=\varnothing$.
Thus we may assume that $|A_i|\geq3$ for all $i\in[\ell-1]$.
Consider each such set $A_i$ to be ordered clockwise around $\conv(A_i)$. 
If $x$ and $y$ are consecutive points in $A_i$ with $y$ clockwise from $x$ then we say that the oriented segment $\overrightarrow{xy}$ is an \emph{arc} of $A_i$.

Let $\overrightarrow{xy}$ be an arc of $A_i$ for some $i\in[\ell-2]$.
We say that $\overrightarrow{xy}$ is \emph{empty} if $\Delta(x,y,z)\cap A_{i+1}=\varnothing$, as illustrated in \figref{Follower}(a).
In this case, the intersection of the boundary of $\conv(A_{i+1})$ and $\Delta(x,y,z)$ is contained in an arc $\overrightarrow{pq}$.
We call $\overrightarrow{pq}$ the \emph{follower} of $\overrightarrow{xy}$.

\Figure{Follower}{\includegraphics{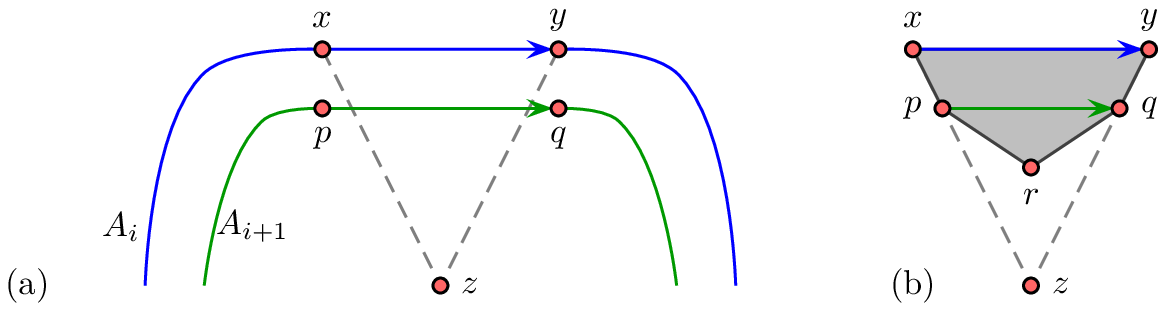}}{}

\begin{claim}
\claimlabel{B}
If $\overrightarrow{pq}$ is the follower of an empty arc $\overrightarrow{xy}$, 
then  $\{x,y,p,q\}$ is a $4$-hole and $\overrightarrow{pq}$ is empty.
\end{claim}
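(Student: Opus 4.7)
The plan is to prove the two assertions in sequence. For the first assertion, that $\{x,y,p,q\}$ is a $4$-hole, I first pin down the geometry of the arc $\overrightarrow{pq}$ inside $\Delta(x,y,z)$. Since $p,q\in A_{i+1}$ lie strictly inside $\conv(A_i)$, both lie on the $z$-side of the line through $\overline{xy}$; consequently the segment $\overline{pq}$ does not cross that line, so the arc $\overrightarrow{pq}$ must enter $\Delta(x,y,z)$ through $\overline{xz}$ and exit through $\overline{yz}$ (or vice versa). Hence the line through $p,q$ strictly separates $\{x,y\}$ from $z$, and $x,y,q,p$ appear in strictly convex cyclic order (the strict nesting of the hulls precludes any three of these four points from being collinear in the generic configuration we must be in). I then verify that no point of $P$ lies in the interior of this quadrilateral by a layer-by-layer analysis: outer layers $A_1,\dots,A_{i-1}$ lie outside $\conv(A_i)$; the remaining points of $A_i$ lie on boundary edges other than $\overline{xy}$, since $x,y$ are consecutive on $A_i$; points of $A_{i+1}$ lie on $\partial\conv(A_{i+1})$, and the quadrilateral's interior, being on the $\overline{xy}$-side of line $pq$, lies outside $\conv(A_{i+1})$; and points of $A_{i+2},\dots,A_\ell$ lie in the interior of $\conv(A_{i+1})$ on the $z$-side of line $pq$, hence outside the quadrilateral.

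For the second assertion, I argue by contradiction: suppose $\Delta(p,q,z)\cap A_{i+2}\neq\varnothing$. Note that $\Delta(p,q,z)$ lies in the interior of $\conv(A_{i+1})$, so every point of $P$ in $\Delta(p,q,z)$ belongs to $A_{i+2}\cup\cdots\cup A_\ell$. Among these, choose $v$ to minimize perpendicular distance to the line through $p,q$. This choice forces the triangle $\conv(\{p,q,v\})$ to contain no other point of $A_{i+2}\cup\cdots\cup A_\ell$: any strictly interior point of the triangle would be strictly closer to line $pq$ than $v$ is, contradicting minimality. A layer-by-layer argument analogous to Part~1 also rules out points of $A_{i+1}$ (the open triangle lies in the interior of $\conv(A_{i+1})$) and of any outer layer (the triangle is strictly inside $\conv(A_i)$); and no other point of $P$ lies on the open segment $\overline{pq}$, since $p,q$ are consecutive on $A_{i+1}$.

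From here, because $v$ lies on the opposite side of line $pq$ from $x,y$, the pentagon $\{x,y,q,v,p\}$ is in strictly convex cyclic order with convex hull $\conv(\{x,y,p,q\})\cup\conv(\{p,q,v\})$, glued along $\overline{pq}$. By Part~1 together with the above, this hull meets $P$ exactly in its five vertices, so it is a $5$-hole. This contradicts the standing assumption that $P$ has no $5$-hole, so $\Delta(p,q,z)\cap A_{i+2}=\varnothing$, as required.

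I expect the main obstacle to lie in the second part, namely verifying that the added triangle $\conv(\{p,q,v\})$ is free of $P$-points. The minimum-distance choice of $v$ is the critical trick, since the perpendicular-distance function is monotone inside the triangle and is uniquely maximized at the apex $v$, which simultaneously rules out all deeper-layer candidates in one stroke; together with the layered-boundary arguments, this collapses the pentagon's interior to be empty.
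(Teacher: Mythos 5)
Your proof is correct and follows essentially the same route as the paper: first establish that $\{x,y,p,q\}$ is in strictly convex position and empty using the nesting of the convex layers, then obtain a $5$-hole by adjoining the point of $\Delta(p,q,z)$ nearest to $\overline{pq}$, contradicting the standing assumption. The only (harmless, arguably beneficial) difference is that you minimise the distance over all of $P\cap\Delta(p,q,z)$ rather than only over $\Delta(p,q,z)\cap A_{i+2}$ as the paper does, which makes the emptiness of the added triangle slightly more immediate.
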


\begin{proof}
Say $\overrightarrow{xy}$ is an arc of $A_i$, where $i\in[\ell-2]$.
Let $S:=\{x,y,p,q\}$. 
Since $p$ and $q$ are in the interior of $\conv(A_i)$, both $x$ and $y$ are corners of $S$.
Both $p$ and $q$ are corners of $S$, as otherwise $\overrightarrow{xy}$ is not empty. 
Thus $S$ is in strictly convex position. $S$ is empty by the definition of $A_{i+1}$. 
Thus $S$ is a $4$-hole.

Suppose that $\overrightarrow{pq}$ is not empty; that is, $\Delta(p,q,z)\cap A_{i+2}\neq\varnothing$.
Let $r$ be a point in $\Delta(p,q,z)\cap A_{i+2} $ closest to $\overline{pq}$.
Thus $\Delta(p,q,r)\cap P=\varnothing$. Since $\{x,y,p,q\}$ is a $4$-hole, $\{x,y,p,q,r\}$ is a $5$-hole, as illustrated in \figref{Follower}(b). 
This contradiction proves that $\overrightarrow{pq}$ is empty.
\end{proof}

As illustrated in \figref{Aligned}(a)--(c), we say the follower $\overrightarrow{pq}$ of $\overrightarrow{xy}$ is:
\begin{itemize}
\item \emph{double-aligned} if $p\in\overline{xz}$ and $q\in\overline{yz}$, 
\item  \emph{left-aligned} if $p\in\overline{xz}$ and $q\not\in\overline{yz}$, 
\item  \emph{right-aligned} if $p\not\in\overline{xz}$ and $q\in\overline{yz}$.
\end{itemize}

\Figure{Aligned}{\includegraphics{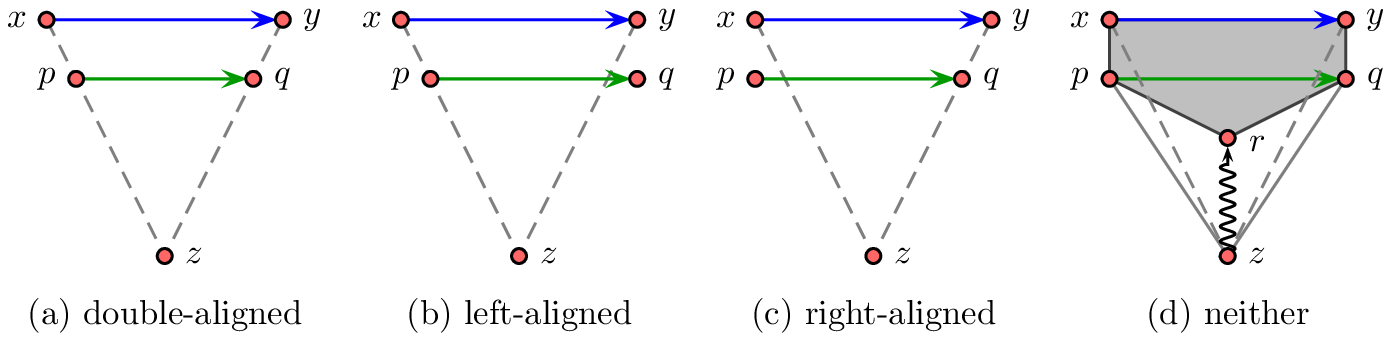}}{}

\begin{claim}
\claimlabel{C}
If $\overrightarrow{pq}$ is the follower of an empty arc $\overrightarrow{xy}$, 
then $\overrightarrow{pq}$ is either double-aligned or left-aligned or right-aligned.
\end{claim}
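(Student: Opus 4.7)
I would prove Claim~C by contradiction, assuming $p\notin\overline{xz}$ and $q\notin\overline{yz}$, and exhibiting a $5$-hole to contradict the standing no-$5$-hole assumption. The idea is to take the $4$-hole $\{x,y,p,q\}$ guaranteed by \claimref{B} and extend it to a pentagon on the $z$-side of $\overline{pq}$, using the closest available $P$-point as the fifth vertex.

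\textbf{Step 1 (Locating $p$ and $q$).} Because $\overrightarrow{pq}$ is the follower of $\overrightarrow{xy}$, the segment $\overline{pq}$ meets the open triangle $\Delta(x,y,z)$. Emptiness of $\overrightarrow{xy}$ forbids $p$ or $q$ from lying strictly inside $\Delta(x,y,z)$, and since $p,q\in A_{i+1}$ are strictly interior to $\conv(A_i)$ while $\overline{xy}\subseteq\partial\conv(A_i)$, the segment $\overline{pq}$ cannot cross $\overline{xy}$. Hence $\overline{pq}$ enters $\Delta(x,y,z)$ through $\overline{xz}$ and exits through $\overline{yz}$. Together with the assumptions $p\notin\overline{xz}$ and $q\notin\overline{yz}$, this forces $p$ to lie strictly on the side of the line $\overline{xz}$ opposite to $y$, and $q$ strictly on the side of $\overline{yz}$ opposite to $x$.

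\textbf{Step 2 (Choosing the fifth vertex).} Let $r$ be a point of $P\cap\bigl(\Delta[p,q,z]\setminus\{p,q\}\bigr)$ at minimum perpendicular distance to the line $\overline{pq}$; such an $r$ exists because $z$ itself lies in this set. Note that $r$ does not lie on the open segment $\overline{pq}$, since any $P$-point there would belong to $A_{i+1}$ strictly between the consecutive points $p$ and $q$, a contradiction.

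\textbf{Step 3 (Showing $\{x,y,q,r,p\}$ is a $5$-hole).} The quadrilateral $\{x,y,p,q\}$ is strictly convex by \claimref{B}, and $r$ lies strictly on the $z$-side of $\overline{pq}$, i.e., opposite to $x$ and $y$. The fact that $p$ lies strictly past $\overline{xz}$ and $q$ strictly past $\overline{yz}$, together with $r\in\Delta[p,q,z]$, yields a strict clockwise turn at each of the five vertices, so the pentagon is strictly convex. Its interior decomposes into (a) the interior of the $4$-hole $\{x,y,p,q\}$, (b) the interior of $\Delta[p,q,r]$, and (c) the open segment $\overline{pq}$. Region (a) is empty of $P$-points by \claimref{B}; region (b) is empty by the minimality of $r$, because any $P$-point in $\Delta[p,q,r]\setminus\{p,q,r\}$ also lies in $\Delta[p,q,z]$ and is strictly closer to $\overline{pq}$ than $r$; region (c) is empty by the consecutiveness of $p,q$ in $A_{i+1}$. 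Thus $\{x,y,q,r,p\}$ is a $5$-hole, contradicting our standing assumption and proving Claim~C.

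\textbf{Main obstacle.} The delicate point is verifying strict convex position of the pentagon in Step~3, which relies squarely on the precise location of $p$ and $q$ established in Step~1. This is also what distinguishes the non-aligned case from the aligned ones: for instance, $p\in\overline{xz}$ would make $x,p,z$ collinear and collapse the pentagon, explaining geometrically why exactly the three aligned configurations can survive.
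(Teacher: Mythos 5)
Your proposal is correct and follows essentially the same route as the paper: negate all three alignment conditions, take the point $r$ of $(P\cap\Delta[p,q,z])-\{p,q\}$ closest to $\overline{pq}$ (which exists since $z$ lies in that set), and combine it with the $4$-hole $\{x,y,p,q\}$ from \claimref{B} to produce a $5$-hole, contradicting the standing assumption. Your Steps 1 and 3 merely spell out the convexity verification that the paper leaves to its figure.
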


\begin{proof}
Suppose that $\overrightarrow{pq}$ is neither double-aligned nor left-aligned nor right-aligned, as illustrated in \figref{Aligned}(d).
Since $\overrightarrow{xy}$ is empty, $p\not\in \Delta[x,y,z]$ and $q\not\in\Delta[x,y,z]$.
Let $D:=(P\cap\Delta[p,q,z])-\{p,q\}$. Thus $z\in D$ and $D\neq\varnothing$.
Let $r$ be a point in $D$ closest to $\overline{pq}$. Thus $\Delta(r,p,q)$ is empty. 
By \claimref{B}, $\{x,y,p,q\}$ is a $4$-hole. Thus $\{x,y,p,q,r\}$ is a $5$-hole, which is the desired contradiction.
\end{proof}

Suppose that no arc of $A_1$ is empty.
That is, $\Delta(x,y,z)\cap A_2\neq\varnothing$ for each arc $\overrightarrow{xy}$ of $A_1$.
Observe that $\Delta(x,y,z)\cap \Delta(p,q,z)=\varnothing$ for distinct arcs $\overrightarrow{xy}$ and $\overrightarrow{pq}$ of $A_1$ 
(since these triangles are open). 
Thus $|A_2|\geq|A_1|$, which contradicts the minimality of $A_1$.

Now assume that some arc $\overrightarrow{x_1y_1}$ of $A_1$ is empty.
For $i=2,3,\dots,\ell-1$, let $\overrightarrow{x_iy_i}$ be the follower of $\overrightarrow{x_{i-1}y_{i-1}}$.
By \claimref{B} (at each iteration), $\overrightarrow{x_iy_i}$ is empty.
For some $i\in[2,\ell-2]$, the arc $\overrightarrow{x_iy_i}$ is not double-aligned, 
as otherwise $\{x_1,x_2,\dots,x_{\ell-2},z\}$ are collinear and $\{y_1,y_2,\dots,y_{\ell-2},z\}$ are collinear, 
which implies that 
$\{x_1,x_2,\dots,x_{\ell-1},z\}$ are collinear or $\{y_1,y_2,\dots,y_{\ell-1},z\}$ are collinear by \claimref{C}.
Let $i$ be the minimum integer in  $[2,\ell-2]$ such that $\overrightarrow{x_iy_i}$ is not double-aligned.
Without loss of generality, $\overrightarrow{x_iy_i}$ is left-aligned. 
On the other hand, $\overrightarrow{x_jy_j}$ is not left-aligned for all $j\in[i+1,\ell-1]$, as otherwise $\{x_1,x_2,\dots,x_{\ell-1},z\}$ are collinear.
Let $j$ be the minimum integer in $[i+1,\ell-1]$ such that $\overrightarrow{x_jy_j}$ is not left-aligned.
Thus   $\overrightarrow{x_{j-1}y_{j-1}}$ is left-aligned and  $\overrightarrow{x_jy_j}$ is not left-aligned.
It follows that $\{x_{j-2},y_{j-2},y_{j-1},y_{j},x_{j-1}\}$ is a $5$-hole, as illustrated in \figref{Hole}.
This contradiction proves that $P$ contains $\ell$ collinear points or a $5$-hole.
\end{proof}



\Figure{Hole}{\includegraphics{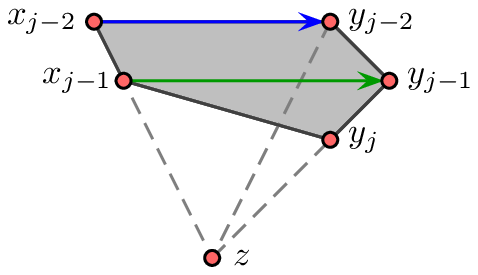}}{}

We expect that the lower bound on $|P|$ in \thmref{GenFiveHole} is far from optimal.
All known point sets with at most $\ell$ collinear points and no $5$-hole have $O(\ell^2)$ points, the $\ell\times\ell$ grid for example. 
See \citep{KPW-DCG05,Eppstein-SOCG07} for other examples.

\medskip\noindent
\textbf{Open Problem.} 
For which values of $\ell$  is there an integer $n$ such that every set of at least $n$ points in the plane contains $\ell$ collinear points or a $6$-hole?

\medskip

This is true for $\ell=3$ by the empty hexagon theorem. If this question is true for a particular value of $\ell$ then \conjref{BigClique} is true for $k=6$ and the same value of $\ell$. For $k\geq7$ different methods are needed since there are point sets in general position with no $7$-hole.

\section*{Acknowledgements}

This research was initiated at The 24th Bellairs Winter Workshop on Computational Geometry, held in February 2009 at the Bellairs Research Institute of McGill University in Barbados. The authors are grateful to Godfried Toussaint and Erik Demaine for organising the workshop, and to the other workshop participants for providing a stimulating working environment. 


\def\cprime{$'$} \def\soft#1{\leavevmode\setbox0=\hbox{h}\dimen7=\ht0\advance
  \dimen7 by-1ex\relax\if t#1\relax\rlap{\raise.6\dimen7
  \hbox{\kern.3ex\char'47}}#1\relax\else\if T#1\relax
  \rlap{\raise.5\dimen7\hbox{\kern1.3ex\char'47}}#1\relax \else\if
  d#1\relax\rlap{\raise.5\dimen7\hbox{\kern.9ex \char'47}}#1\relax\else\if
  D#1\relax\rlap{\raise.5\dimen7 \hbox{\kern1.4ex\char'47}}#1\relax\else\if
  l#1\relax \rlap{\raise.5\dimen7\hbox{\kern.4ex\char'47}}#1\relax \else\if
  L#1\relax\rlap{\raise.5\dimen7\hbox{\kern.7ex
  \char'47}}#1\relax\else\message{accent \string\soft \space #1 not
  defined!}#1\relax\fi\fi\fi\fi\fi\fi} \def\Dbar{\leavevmode\lower.6ex\hbox to
  0pt{\hskip-.23ex \accent"16\hss}D}

\end{document}